\newcommand{\Rd}{{\mathbb{R}^d}}
\newcommand{\RR}{{\mathbb{R}}}
\newcommand{\R}{\mathbb{R}}
\newtheorem{thm}{Theorem}
\newtheorem{lem}{Lemma}
\newtheorem{cor}[lem]{Corollary}
\title{Heat kernel of fractional Laplacian
in cones}
\author
{Krzysztof Bogdan \and Tomasz Grzywny
}
\date{November 2008}
\begin{document}
\maketitle
\sloppy \footnotetext{\emph{2000 Mathematics Subject
Classification:} Primary 60J35, 60J50; Secondary 60J75, 31B25.
\emph{Key words and phrases:} cone, fractional Laplacian,
killed stable L\'evy process, transition density, heat kernel.
\\ Research partially supported by KBN (MNiI 1 P03A 026 29) }

This paper is devoted to the memory of Professor Andrzej Hulanicki.
\begin{abstract}
We give sharp estimates for the transition density of the isotropic stable
L\'evy process killed when leaving a right circular cone.
\end{abstract}
\maketitle

\section{Introduction}\label{s:I}
Explicit {sharp} estimates for the Green function of
the Laplacian in $C^{1,1}$ domains were given in 1986 by Zhao 
\cite{MR842803} (see also \cite{MR657523, MR902422}).
Sharp estimates of the Green function of Lipschitz domains 
were given in 2000 by Bogdan \cite{MR1741527}.
Explicit {qualitatively} sharp estimates for the classical heat
kernel in $C^{1,1}$ domains were established in 2002 by Zhang 
\cite{MR1900329} (see also \cite{MR1974093, MR879702}, 
and \cite{MR2255354, MR1968386} for further extensions).
Qualitatively sharp heat kernel estimates in Lipschitz domain
were given in 2003 by Varopulous \cite{MR1969798}.

The development of the boundary potential theory of the fractional
Laplacian follows an analogous path. Green function estimates
were obtained in 1997 and 1998 by Kulczycki and Chen and Song for
$C^{1,1}$ domains \cite{MR1490808, MR1654824} (see
\cite[Corollary 1.8]{CKS2008} for the case of
dimension one, see also \cite{MR1825645}), 
and in 2002 by Jakubowski for Lipschitz domains \cite{MR1991120}
(see also \cite{MR2213639, MR2182071}). 
In 2008 Chen, Kim and Song gave 
a sharp and explicit estimate for the heat kernel of the fractional Laplacian
on $C^{1,1}$ domains \cite{CKS2008}, see (\ref{eq:CKS}) below.
In this note we give an extension of the estimate to the
right circular cones. We also {conjecture}, in agreement with the
results \cite{MR1969798}, a likely
form of the estimate for a more general class of domains:
  \begin{equation}
    \label{eq:*g}
    p_t^D(x,y)\approx P_x(\tau_D>t)P_y(\tau_D>t)p_t(x,y)\,.
  \end{equation}
Here $p_t(x,y)$ is the heat kernel of the fractional Laplacian on the
whole space $\Rd$, and 
$P_x(\tau_D>t)=\int_\Rd p_t^D(x,y)dy$ is the {\it survival
  probability} of the corresponding isotropic $\alpha$-stable L\'evy process. The main result
of this paper, Theorem~\ref{prop:ppu}, asserts that (\ref{eq:*g}) 
holds indeed for the right circular cones for all $t>0$, $x,y\in \Rd$
(see also (\ref{eq:etdG2}) for a more explicit statement).
Noteworthy, all the above-mentioned estimates for bounded $C^{1,1}$
domains have the same form as for the ball
 (in this connection compare \cite[Corollary 1.2]{CKS2008} with
\cite[Corollary 3]{MR1825645}; see also (\ref{eq:CKS}) below).
We also like to note that the right circular cones are merely special Lipschitz domains,
but a number of techniques and explicit formulas make them
an interesting and important test case (see \cite{MR1465162, MR863716,
MR1062058, MR0474525, MR799436}).
We hope to encourage a further study of Lipschitz and more general domains for
stable and other jump-type processes \cite{MR1942325, MR2238879,
  MR2085428, MR2006232}.
We should emphasize that generally the estimates for Lipschitz domains cannot be as
explicit as those for $C^{1,1}$ domains. For instance, the decay rate of
harmonic and parabolic functions in the vertex of a cone delicately depends on the
aperture of the cone, see 
\cite{MR2075671, MR2213639} (see also \cite{MR1741527}). 
Nevertheless Lipschitz domains offer a natural
setting for studying the boundary behavior of the Green function and
the heat kernel of the Brownian motion and $\alpha$-stable L\'evy
processes ($0<\alpha<2$).
This is so because of {\it scaling}, the rich range of
asymptotic behaviors depending on the local geometry of the domain's boundary, connections to
the boundary Harnack principle, approximate factorization of the Green
function, and applications in the perturbation
theory of generators, in particular via the 3G Theorem,
\cite{MR1741527, MR2075671, MR842803, MR2153910, MR2365478, MR2160104, MR2207878,
  MR1671973, MR2283957, 2008KBTJ}, and 3P Theorem \cite{MR2283957}.
Noteworthy, (\ref{eq:*g}) is an approximate factorization of the
heat kernel (see \cite{MR1741527, MR2365478} in this connection).

Cones are also examples of unbounded domains, which are only partially resolved by 
the results of \cite{CKS2008, CKS2008censored} (note that (\ref{eq:CKS}) is
valid only for bounded times). 
We should note that the {upper} bound in (\ref{eq:CKS}) was
proved in 2006 by Siudeja for semibounded convex domains \cite[Theorem
1.6]{MR2255353} (stated for general convex domain in \cite[Remark
  1.7]{MR2255353}). 
It appears that the impulse for the proof of (\ref{eq:CKS})
was given by Siudeja and Kulczycki in \cite[Theorem 4.2]{MR2231884},
see also \cite[Proposition 2.9]{MR2438694} by Kulczycki and Ba\~nuelos.
A similar but weaker upper bound was earlier given in   
\cite[(26)]{MR2075671}, see also \cite{MR2280260, 2008-MK, MR0208681}.
We also remark that \cite[Theorem 4.4]{MR2386098} gives a sharp explicit estimate for
the survival probability of the relativistic process in a half-line.
Generally, the subject is far from exhausted--and it seems manageable
with the existing techniques. 

For completeness we like to mention recent estimates
\cite{MR2357678,MR2320691, MR2238934, MR2286060, MR2365348, MR2430977,2008-MB-RB-TK}
for transition density and potential kernel of jump-type processes.
We need to point out that generally these are {estimates} for processes without killing.
Killing is a  dramatic ``perturbation''
analogous to Schr\"odinger perturbations with
singular negative potentials \cite{MR1825645,2008KBTJ, MR0264757, MR2006232}, and it
strongly influences the asymptotics of the transition density and Green function.
The asymptotics is crucial for solving the Dirichlet problem
for the corresponding
operators, see also \cite{MR2417435, MR2386098}.
As we shall see, the heat kernel of the fractional Laplacian in the right
circular cones has a
power-type asymptotics at infinity, and it decays like the distance to the
boundary to the power $\alpha/2$ except at the vertex, where it decays
with the rate of $\beta\in (0,\alpha)$.

The paper is composed as follows. Below in this section we recall
basic facts about the transition density of the $\alpha$-stable
L\'evy processes killed when first leaving a domain. In
Section~\ref{s:s} we give a sharp explicit estimate for the {
survival probability} $P_x(\tau_D>t)$ for $C^{1,1}$ domains $D$. In
Section~\ref{s:c} we prove our main estimates,
Theorem~\ref{prop:ppu} and (\ref{eq:etdG2}), 
by using the ideas and
results of \cite{CKS2008} and \cite{MR2075671}. Our general
references to the {boundary potential theory} of the fractional
Laplacian are \cite{MR1671973} and \cite{MR2365478}. We also refer
the reader to \cite{KB-TB-MR-TK-RS-ZV} for a broad non-technical
overview of the goals and methods of the theory.

In what follows, $\Rd$  denotes the Euclidean space of dimension $d\ge 1$,
$dx$ is the Lebesgue measure on $\Rd$, and $0<\alpha<2$.
For $t>0$ we let $p_t$ be the smooth real-valued
function on $\Rd$ with the following Fourier transform,
\begin{equation}
  \label{eq:dpt}
  \int_ \Rd p_t(x)e^{ix\cdot\xi}\,dx=e^{-t|\xi|^\alpha}\,,\quad \xi\in
   \Rd\,.
\end{equation}
For instance, $\alpha=1$ yields
\begin{equation*}
p_t(x)=
\Gamma((d+1)/2)\pi^{-(d+1)/2}
\frac{t}{\big(|x|^2+t^2)^{(d+1)/2}}\,,
\end{equation*}
the Cauchy convolution semigroup of functions \cite{MR0290095}.
We generally have that
\begin{equation}
  \label{eq:sca}
  p_t(x)=t^{-d/\alpha}p_1(t^{-1/ \alpha}x)\,,\quad x\in \Rd\,,\;t>0\,.
\end{equation}
This follows from (\ref{eq:dpt}).
The semigroup $P_t f(x)=\int_ \Rd f(y) p_t(y-x) dy$
has $\Delta^{\alpha/2}$ as infinitesimal generator (\cite{MR0481057}, \cite{MR1336382},
\cite{MR1671973}, \cite{MR1873235}), where
\begin{eqnarray*}
   \Delta^{\alpha/2}\varphi(x) &=& 
\frac{2^{\alpha}\Gamma((d+\alpha)/2)}{\pi^{d/2}|\Gamma(-\alpha/2)|}
\lim_{\varepsilon \downarrow 0}\int_{\{|y|>\varepsilon\}}
   \frac{\varphi(x+y)-\varphi(x)}{|y|^{d+\alpha}}dy\,,
\quad
x\in \Rd\,. 
   \end{eqnarray*}
Here $\phi\in C^\infty_c(\Rd)$, i.e. $\phi:\Rd\to\R$ is
smooth and compactly supported on
$\Rd$. Put differently,
\begin{equation*}
\int\limits_{s}^\infty\int\limits_{ \Rd}
p_{u-s}(z-x)\left[
\partial_u\phi(u,z)+\Delta^{\alpha/2}_z \phi(u,z)\right]\,dzdu
 = -\phi(s,x)\,,
\end{equation*}
where $s\in \RR$, $x\in  \Rd$, and $\phi\in C^\infty_c(\RR\times
\Rd)$ (\cite{2008KBTJ}).
We denote
$$\nu(y)=\frac{2^{\alpha}\Gamma((d+\alpha)/2)}{\pi^{d/2}|\Gamma(-\alpha/2)|}|y|^{-d-\alpha}\,,$$
the density function of the L\'evy measure of the
semigroup $\{P_t\}$ \cite{MR1739520, MR2013738, KB-TB-MR-TK-RS-ZV}.

There is a constant $c$
such that (see \cite{MR2013738} or \cite{MR119247})
\begin{equation}
  \label{eq:setd}
c^{-1} \left(\frac{t}{|x|^{d+\alpha}} \land
        t^{-d/\alpha}\right) \le p_t(x) \le c
        \left(\frac{t}{|x|^{d+\alpha}} \land t^{-d/\alpha}\right)\,,
\quad x\in  \Rd\,,\;t>0\,.
\end{equation}
(\ref{eq:setd}) and similar {\it sharp estimates} 
(i.e. such that the lower and upper bounds are comparable) will be
abbreviated as follows:
\begin{equation}
  \label{eq:ppfe}
p_t(x)\approx t^{-d/\alpha}\wedge \frac{t}{|x|^{d+\alpha}}
\,, \quad x\in \Rd\,,t>0\,.
\end{equation}
The {standard} isotropic $\alpha$-stable L\'evy
process $(X_t,P_x)$ on $\Rd$ may be constructed by specifying 
the following time-homogeneous transition probability:
$$
P_t(x,A)=\int_A p_{t}(y-x)dy\,,\quad t>0\,,\;x\in \Rd\,,\; A\subset \Rd\,,
$$
and stipulating that $P_x(X(0)=x)=1$. Thus, $P_x$, $E_x$ denote
the distribution and expectation for the process starting from
$x$. The distribution of the process is concentrated on right
continuous functions: $[0,\infty)\to \Rd$ with left limits, and for all $s\geq 0$, $x\in
\Rd$ we have that $P_x(X_s=X_{s-})=1$. It is well-known that
$(X_{t},P_{x})$ is strong Markov with respect to the so-called
standard filtration \cite{MR1406564, MR0264757}. The L\'evy system
(see \cite[VII.68]{MR745449}, \cite[Appendix A]{MR2357678}, see also
\cite[Theorem 2.4]{MR2255353}, \cite[Corollary 2.8]{MR2231884} and
\cite[Lemma 1]{MR2345912}) for $(X_{t},P^{x})$ amounts to the
equality,
\begin{equation}
  \label{eq:Ls}
E_x\left[\sum_{s\leq T}f(s,X_{s-}, X_s)\right]
=E_x\left[\int_0^T\left(\int_\Rd
f(s,X_{s},y)\nu(w-X_{s})dw\right)ds\right]\,,
\end{equation}
where $x\in \Rd$, $f\geq 0$ is a Borel function on
$\RR\times \Rd\times \Rd$, such that $f(s,z,w)=0$ if $z=w$, and $T$
is a stopping time with respect to the filtration of $X$.

For open $D\subset \Rd$ we let
$
\tau_D=\inf\{t>0: \, X_t\notin D\}
$, and we define
$$
p^D_t(x,y)=p_t(x,y)-E_x[\tau_D<t;\,
p_{t-\tau_D}(X_{\tau_D},y)],\quad x,y\in \Rd,\, t>0\,,
$$
see, e.g., \cite{CKS2008, MR1671973}. 
Clearly,
\begin{equation}\label{eq:**}
0\leq p^D_t(x,y)\leq p_t(y-x)\,.
\end{equation}
By the strong Markov property, $p^D_t$ is the
transition density of the isotropic stable process {\it killed} on leaving $D$, meaning that
$p^D$ satisfies the Chapman-Kolmogorov equation:
$$
\int_\Rd p_s^D(x,z)p_t^D(z,y)dz=p_{s+t}^D(x,y)\,,\quad x,y\in \Rd,\, s,t>0\,,
$$
and for every $x\in\Rd$, $t>0$ and bounded Borel function $f$,
$$\int_\Rd f(y)p^D_t(x,y)dy=E_x[\tau_D<t;\, f(X_t)]\,.$$
Furthermore, for $s\in \RR$, $x\in  \Rd$, and $\phi\in C^\infty_c(\RR\times
D)$, we have
\begin{equation*}
\int\limits_{s}^\infty\int\limits_{D}
p^D_{u-s}(x,z)\left[
\partial_u\phi(u,z)+\Delta^{\alpha/2}_z \phi(u,z)\right]\,dzdu
 = -\phi(s,x)\,,
\end{equation*}
which justifies calling $p^D$ the heat kernel of the fractional
Laplacian {\it on} $D$.
In analogy with (\ref{eq:sca}) we have the following scaling property
\begin{equation}
  \label{eq:scD}
  p^D_t(x,y)=t^{-d/\alpha}
  p_1^{t^{-1/\alpha}D}(t^{-1/\alpha}x,t^{-1/\alpha}y)\,,\quad x,y\in \Rd\,,\; t>0\,.
\end{equation}
\section{$C^{1,1}$ domains}\label{s:s}

Let $D\subset \Rd$ be a $C^{1,1}$ domain, meaning that $D$ is open and there is
$r_0>0$ such that for every $z\in \partial D$ there exist balls
$B_z(r_0)\subset D$ and $B_z'(r_0)\subset D^c$ of radius $r_0$,
{\it tangent} at $z$. Denote $\delta_D(x)={\rm dist}(x,D^c)$, the
distance to $D^c$. 
The transition density of the stable L\'evy process killed off $D$
satisfies (\cite{CKS2008})
  \begin{equation}
    \label{eq:CKS}
    p_t^D(x,y)\approx
    \left(1\wedge\frac{\delta_D^{\alpha/2}(x)}{\sqrt{t}}\right)
\left(1\wedge\frac{\delta_D^{\alpha/2}(y)}{\sqrt{t}}\right)
p_t(x,y)\,,\quad 0<t\leq
    1\,,\;x,y\in \Rd\,.
  \end{equation}
\begin{cor} If $D$ is a $C^{1,1}$ domain then
    \begin{equation}
    \label{eq:CKS0}
    P_x(\tau_D>t)\approx    1\wedge\frac{\delta_D^{\alpha/2}(x)}{\sqrt{t}}\,,\quad 0<t\leq
    1\,,\;x,y\in \Rd\,.
  \end{equation}
\end{cor}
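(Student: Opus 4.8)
The plan is to obtain the estimate by integrating the heat kernel bound \eqref{eq:CKS} against $dy$, since $P_x(\tau_D>t)=\int_\Rd p_t^D(x,y)\,dy$. First I would fix $0<t\le 1$ and $x\in\Rd$. Using \eqref{eq:CKS}, one has
\begin{equation*}
P_x(\tau_D>t)\approx\left(1\wedge\frac{\delta_D^{\alpha/2}(x)}{\sqrt t}\right)\int_\Rd\left(1\wedge\frac{\delta_D^{\alpha/2}(y)}{\sqrt t}\right)p_t(x,y)\,dy\,,
\end{equation*}
so the task reduces to showing that the remaining integral is comparable to a constant, uniformly in $0<t\le 1$ and $x\in\Rd$. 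The upper bound of that integral by $1$ is immediate from \eqref{eq:**} (or simply $1\wedge(\cdots)\le 1$ and $\int p_t=1$), so the real content is the matching lower bound.

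For the lower bound I would argue that the mass of $p_t(x,\cdot)$ that $y$ places at points with $\delta_D(y)\gtrsim\sqrt{t}^{\,2/\alpha}=t^{1/\alpha}$ is bounded below. Concretely, pick a fixed interior point $y_0$ of $D$ with $\delta_D(y_0)\ge r_0$ (available since $D$ is a $C^{1,1}$ domain, hence nonempty with interior balls of radius $r_0$); for $y$ in a ball $B(y_0, r_0/2)$ we have $\delta_D^{\alpha/2}(y)/\sqrt t\ge (r_0/2)^{\alpha/2}$, which is $\gtrsim 1$ for $t\le 1$ — more precisely $1\wedge \delta_D^{\alpha/2}(y)/\sqrt t \ge c(r_0,\alpha)$ there. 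Hence
\begin{equation*}
\int_\Rd\left(1\wedge\frac{\delta_D^{\alpha/2}(y)}{\sqrt t}\right)p_t(x,y)\,dy\ \ge\ c(r_0,\alpha)\,P_x\big(X_t\in B(y_0,r_0/2)\big)\,,
\end{equation*}
or rather the analogous statement for the free process, $\int_{B(y_0,r_0/2)}p_t(x,y)\,dy$. Using the lower heat kernel bound \eqref{eq:ppfe}, for $t\le 1$ and $y\in B(y_0,r_0/2)$ one gets $p_t(x,y)\gtrsim t/(|x-y|^{d+\alpha}\vee t^{(d+\alpha)/\alpha})$, and integrating over the ball of fixed radius yields a lower bound that, after a short computation, is bounded below by a positive constant depending only on $d,\alpha,r_0$ — this uses that $t\le 1$ to control $t^{(d+\alpha)/\alpha}$ and the fact that a fixed-radius ball captures a uniformly positive fraction of the mass of $p_t(x,\cdot)$ regardless of where $x$ sits (when $|x-y_0|$ is large the integrand is $\approx t|x-y_0|^{-d-\alpha}$ but one should instead center the argument on a ball around $x$ itself).

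The cleaner route, and the one I would ultimately carry out, is: if $\delta_D(x)\ge t^{1/\alpha}$ there is nothing to prove on the right-hand side beyond $P_x(\tau_D>t)\le 1$ together with the standard interior estimate $P_x(\tau_{B(x,\delta_D(x))}>t)\gtrsim 1$ when $t\le\delta_D(x)^\alpha$ (scaling plus the fact that for the unit ball $P_0(\tau_{B(0,1)}>1)>0$); if $\delta_D(x)< t^{1/\alpha}$, then $1\wedge\delta_D^{\alpha/2}(x)/\sqrt t=\delta_D^{\alpha/2}(x)/\sqrt t$ and the integral above is bounded below by a constant by the fixed-interior-ball argument, while the upper bound is $\le\delta_D^{\alpha/2}(x)/\sqrt t$ again by integrating \eqref{eq:CKS}. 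The main obstacle is purely bookkeeping: ensuring all comparability constants are uniform in $0<t\le 1$ and in $x\in\Rd$ (including $x\notin D$, where both sides vanish), and handling the case $\delta_D(x)\ge t^{1/\alpha}$ — equivalently $1\wedge\delta_D^{\alpha/2}(x)/\sqrt t=1$ — where one must produce the lower bound $P_x(\tau_D>t)\gtrsim 1$ from an interior ball argument rather than from \eqref{eq:CKS} directly.
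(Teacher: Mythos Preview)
Your overall strategy---integrate \eqref{eq:CKS} in $y$, note the upper bound $I_t(x):=\int_D(1\wedge\delta_D^{\alpha/2}(y)/\sqrt t)\,p_t(x,y)\,dy\le 1$, and then prove $\inf_{x,t} I_t(x)>0$---matches the paper exactly, and your treatment of the case $\delta_D(x)\ge t^{1/\alpha}$ via $P_x(\tau_{B(x,\delta_D(x))}>t)\gtrsim 1$ is a valid (and slightly different) alternative to the paper's argument there.

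There is, however, a genuine gap in the case $\delta_D(x)<t^{1/\alpha}$. You yourself flag that the fixed-interior-ball argument fails when $|x-y_0|$ is large, yet in the ``cleaner route'' you invoke it again without repair. Since the corollary is applied in the paper to \emph{unbounded} $C^{1,1}$ domains (the sets $\Gamma',\Gamma''$ in the proof of Lemma~\ref{lem:eetG}), the bound $\int_{B(y_0,r_0/2)}p_t(x,y)\,dy\gtrsim t|x-y_0|^{-d-\alpha}$ is not uniformly positive. Nor can you simply ``center the ball at $x$'': when $\delta_D(x)<t^{1/\alpha}$ there is no ball around $x$ of radius comparable to $t^{1/\alpha}$ inside $D$, and a ball of radius $\delta_D(x)$ carries $p_t(x,\cdot)$-mass only of order $(\delta_D(x)/t^{1/\alpha})^{d}\to 0$.

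The paper's fix is to use the $C^{1,1}$ interior ball condition \emph{near} $x$: take $z\in\partial D$ with $|x-z|=\delta_D(x)$ and the inner tangent ball of radius $t^{1/\alpha}\wedge r_0$ at $z$, centered at $w$. Then $|x-w|\le t^{1/\alpha}$, and after the change of variables $v=(y-w)/t^{1/\alpha}$ one obtains
\[
I_t(x)\ \ge\ \int_{B(0,1\wedge r_0)}(1-|v|)^{\alpha/2}\,p_1(u,v)\,dv,\qquad u=t^{-1/\alpha}(x-w)\in \overline{B(0,1)},
\]
which is continuous and strictly positive on the compact set $\overline{B(0,1)}$, hence bounded below uniformly. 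The missing idea in your proposal is precisely this: the ball must move with $x$ \emph{and} shrink with $t$ at rate $t^{1/\alpha}$, and the $C^{1,1}$ interior ball condition is what guarantees such a ball exists inside $D$ within distance $\lesssim t^{1/\alpha}$ of $x$.
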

\begin{proof}
We have
$$
P_x(\tau_D>t)=\int_\Rd p_t^D(x,y)dy\,.
$$
By (\ref{eq:CKS}),
\begin{eqnarray*}
P_x(\tau_D>t)&=& \int_D p_t^D(x,y)dy \approx
\left(1\wedge\frac{\delta_D^{\alpha/2}(x)}{\sqrt{t}}\right) I_t(x)
\,,\quad 0<t\leq
    1\,,\;x,y\in \Rd\,,
\end{eqnarray*}
where
$$I_t(x)=\int_D
\left(1\wedge\frac{\delta_D^{\alpha/2}(y)}{\sqrt{t}}\right)p_t(x,y)dy\,.$$
Clearly, $I_t(x)\leq\int_{\Rd} p_t(x,y)dy=1$. This yields the
upper bound in (\ref{eq:CKS0}). To prove the lower bound we consider
$0<t\leq 1$ and we will first
assume that $\delta_D(x)>t^{1/\alpha}$. If
$|y-x|<t^{1/\alpha}/2$, then $p_t(x,y)\approx t^{-d/\alpha}$, and
we get
\begin{eqnarray*}
I_t(x)\geq c\int_{|y-x|<t^{1/\alpha}/2}
t^{-d/\alpha}\,
dy=c>0\,.
\end{eqnarray*}
If $\delta_D(x)\leq t^{1/\alpha}$, then let $z\in \partial D$ be
such that $|x-z|=\delta_D(x)$, and consider the inner tangent ball
$B_z(t^{1/\alpha}\wedge r_0)$ for $D$ at $z$, with center at, say,
$w$. We have
$$
I_t(x)\geq \int_{B_z(t^{1/\alpha}\wedge r_0)}
 \frac{(t^{1/\alpha}-|y-w|)^{\alpha/2}}{\sqrt{t}} p_t(x,y)\,dy\,.
$$
Since
$p_t(x,y)=t^{-d/\alpha}p_1(\frac{x-w}{t^{1/\alpha}}
,\frac{y-w}{t^{1/\alpha}})$,
by changing variable $v=\frac{(y-w)}{t^{1/\alpha}}$, we get
$$
I_t(x)\geq \int_{B(0,1\wedge r_0)} (1-|v|)^{\alpha/2}
p_1(u,v)\,dv\,,
$$
where $u=t^{-1/\alpha}(x-w)\in B(0,1)$.
The latter integral is continuous and strictly positive for $u\in
\overline{B(0,1)}$. Thus, $\inf_{x\in D}I_t(x)>0$.
The proof of (\ref{eq:CKS0}) is complete.
\end{proof}

\begin{cor} \label{cor:c11}
If $D$ is a $C^{1,1}$ domain then
  \begin{equation*}
    p_t^D(x,y)\approx P_x(\tau_D>t)P_y(\tau_D>t)p_t(x,y)\,,\quad 0<t\leq
    1\,,\;x,y\in \Rd\,.
  \end{equation*}
\end{cor}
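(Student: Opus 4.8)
The plan is to obtain Corollary~\ref{cor:c11} as an immediate consequence of the Chen--Kim--Song estimate (\ref{eq:CKS}) and the survival-probability estimate (\ref{eq:CKS0}) established in the preceding corollary. Both of these are two-sided estimates whose comparison constants depend only on $d$, $\alpha$ and the $C^{1,1}$ characteristic $r_0$ of $D$; in particular they are uniform in $x,y\in\Rd$ and in $t\in(0,1]$, which is what makes the substitution below legitimate.

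First I would record (\ref{eq:CKS}) in the form
\begin{equation*}
p_t^D(x,y)\approx\Big(1\wedge\frac{\delta_D^{\alpha/2}(x)}{\sqrt t}\Big)\Big(1\wedge\frac{\delta_D^{\alpha/2}(y)}{\sqrt t}\Big)p_t(x,y)\,,\qquad 0<t\le1,\ x,y\in\Rd\,.
\end{equation*}
Next, by (\ref{eq:CKS0}) applied at $x$ and at $y$, one has $1\wedge\delta_D^{\alpha/2}(x)/\sqrt t\approx P_x(\tau_D>t)$ and $1\wedge\delta_D^{\alpha/2}(y)/\sqrt t\approx P_y(\tau_D>t)$ for $0<t\le1$. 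Since a product of finitely many two-sided estimates with uniform constants is again a two-sided estimate (with the product of the constants), multiplying these two comparisons together with $p_t(x,y)\approx p_t(x,y)$ turns the displayed bound for $p_t^D(x,y)$ into
\begin{equation*}
p_t^D(x,y)\approx P_x(\tau_D>t)\,P_y(\tau_D>t)\,p_t(x,y)\,,\qquad 0<t\le1,\ x,y\in\Rd\,,
\end{equation*}
which is exactly the assertion.

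The only points that need a word of care are bookkeeping ones rather than genuine obstacles. One is to be sure the constants in (\ref{eq:CKS0}) do not depend on $x$; this is precisely what the compactness argument at the end of the preceding proof secures, since it gives $\inf_{x\in D}I_t(x)>0$ with a bound not involving $x$. The other is the degenerate case $x\in D^c$ or $y\in D^c$: there $\delta_D$ vanishes, so both sides of the claimed estimate equal $0$, consistently with $p_t^D(x,y)=0$ and $P_x(\tau_D>t)=0$, and the estimate holds trivially. No step here is expected to present difficulty; the content of the corollary is entirely carried by (\ref{eq:CKS}) and (\ref{eq:CKS0}).
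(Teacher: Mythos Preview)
Your proposal is correct and matches the paper's approach exactly: the corollary is stated without proof there, being an immediate substitution of the survival-probability estimate (\ref{eq:CKS0}) into the Chen--Kim--Song bound (\ref{eq:CKS}). Your additional remarks about uniformity of constants and the degenerate boundary case are fine bookkeeping, though the paper does not spell them out.
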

\section{Cones}\label{s:c}
For $x\in \Rd\setminus\{0\}$ we denote by $\theta(x)$ the angle
between $x$ and the point $(0,\ldots,0,1)\in \Rd$.
We fix $0<\Theta<\pi$ and consider
the right circular cone
$\Gamma=\{x\in \Rd\setminus \{0\}:\, \theta(x)<\Theta\}$.
Clearly, $r\Gamma=\Gamma$ for every $r>0$.
By (\ref{eq:scD}),
\begin{equation}
  \label{eq:scG}
  p^\Gamma_t(x,y)=t^{-d/\alpha}
  p_1^{\Gamma}(t^{-1/\alpha}x,t^{-1/\alpha}y)\,,\quad x,y\in \Rd\,,\; t>0\,.
\end{equation}
We fix $x_0\in \Gamma$ and consider the Martin kernel $M$ for $\Gamma$ with the pole at
infinity, so normalized that $M(x_0)=1$.
It is known that there is $0\leq\beta<\alpha$ such that
\begin{equation*}
M(x)=|x|^\beta M(x/|x|)\,,\quad x\neq 0\,,
\end{equation*}
see \cite{MR2075671}, \cite{MR2213639}, \cite{MR2182071}.
Since the boundary of $\Gamma$ is smooth except at the origin, by \cite[Lemma 3.3]{MR2213639},
\begin{equation}
  \label{eq:ojmcc}
M(x)\approx \delta_\Gamma(x)^{\alpha/2}|x|^{\beta-\alpha/2}\,,\quad
  x\in \Rd\,.
\end{equation}
The following result strengthens \cite[Lemma 4.2]{MR2075671}.
\begin{lem}\label{lem:eetG}
If $\Gamma$ is a right circular cone then
\begin{equation}
    \label{eq:eetGs}
P_x(\tau_\Gamma>t) \approx \left(
\delta_\Gamma^{\alpha/2}(t^{-1/\alpha}x)\wedge
1\right)\left(|t^{-1/\alpha}x|\wedge 1\right)^{\beta-\alpha/2}
\,,\quad x\in \Rd\,,\;t>0\,.
  \end{equation}
\end{lem}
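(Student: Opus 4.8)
The plan is to exploit scaling to reduce the two-parameter estimate to a statement at time $t=1$, and then to separate the analysis according to whether $t^{-1/\alpha}x$ is near the vertex, near the lateral boundary, or deep inside $\Gamma$. By the scaling identity $P_x(\tau_\Gamma>t)=P_{t^{-1/\alpha}x}(\tau_\Gamma>1)$, which follows from (\ref{eq:scG}) by integrating in $y$, it suffices to prove
\begin{equation}\label{eq:reduce}
P_x(\tau_\Gamma>1)\approx\left(\delta_\Gamma^{\alpha/2}(x)\wedge 1\right)\left(|x|\wedge 1\right)^{\beta-\alpha/2}\,,\quad x\in\Rd\,.
\end{equation}
First I would record the easy facts: $P_x(\tau_\Gamma>1)=0$ for $x\notin\Gamma$, $P_x(\tau_\Gamma>1)\le 1$ always, and $P_x(\tau_\Gamma>1)$ is bounded below by a positive constant on any fixed compact subset of $\Gamma$ bounded away from $\partial\Gamma$ (by continuity and positivity of the killed heat kernel, or by a direct estimate using (\ref{eq:ppfe})). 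So the content is the behavior as $x\to 0$, as $x\to\partial\Gamma$, and as $|x|\to\infty$.

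The key comparison is with the Martin kernel $M$, which is $\Delta^{\alpha/2}$-harmonic in $\Gamma$, vanishes on $\Gamma^c$, and is homogeneous of degree $\beta$. The strategy, following \cite{MR2075671}, is to compare $u(x):=P_x(\tau_\Gamma>1)$ to the function $h(x):=M(x)\wedge 1$, or more precisely to $(\delta_\Gamma^{\alpha/2}(x)\wedge 1)(|x|\wedge 1)^{\beta-\alpha/2}$, which by (\ref{eq:ojmcc}) is comparable to $M(x)$ when $|x|\le 1$ and comparable to $\delta_\Gamma^{\alpha/2}(x)\wedge 1$ when $|x|\ge 1$. For $x$ in the unit ball, I would use scaling of the cone ($r\Gamma=\Gamma$) together with the boundary Harnack principle and the known estimate (\ref{eq:ojmcc}): the function $x\mapsto P_x(\tau_\Gamma>1)$ should, on $\Gamma\cap B(0,1)$, be comparable to $P_x(\tau_{\Gamma}>|x|^\alpha)$ evaluated via self-similarity, which produces the factor $M(x)\approx\delta_\Gamma^{\alpha/2}(x)|x|^{\beta-\alpha/2}$. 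For $|x|\ge 1$ away from the vertex, the cone looks locally like a $C^{1,1}$ domain (indeed like a half-space at the relevant scale once $\delta_\Gamma(x)\lesssim 1$, or the survival probability is bounded below when $\delta_\Gamma(x)\gtrsim 1$), so the one-time estimate (\ref{eq:CKS0}) in the form $1\wedge\delta_\Gamma^{\alpha/2}(x)$ applies after a localization argument; here one needs that excursions reaching the vertex region from far away cost a bounded factor, which is where the $(|x|\wedge 1)^{\beta-\alpha/2}=1$ factor is consistent.

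Concretely, I would carry out the steps in this order: (i) reduce to $t=1$ via scaling; (ii) dispose of $x\notin\Gamma$ and the upper bound $u\le 1$, and establish the lower bound on a fixed ``good'' compact set $K\subset\Gamma$ using (\ref{eq:ppfe}); (iii) handle the region $|x|\le 1$ by a self-similarity / boundary Harnack argument comparing $u$ to $M$, invoking (\ref{eq:ojmcc}) to rewrite $M$ in the stated product form — the lower bound here comes from forcing the process to travel from $x$ out to distance of order $1$ inside $\Gamma$ before time $1$ (a chaining of small balls whose number depends logarithmically, but the probabilistic cost telescopes into the power $|x|^\beta$), and the upper bound from the supermartingale property of $M$ and an estimate $E_x[M(X_{\tau_{\Gamma\cap B}})]\le c$; (iv) handle $|x|\ge 1$ by splitting into $\delta_\Gamma(x)\ge 1$ (where $u\approx 1$, again by (\ref{eq:ppfe}) for the lower bound and trivially for the upper) and $\delta_\Gamma(x)<1$ (where a localized version of the $C^{1,1}$ argument behind (\ref{eq:CKS0}), using an inner tangent ball of radius comparable to $\delta_\Gamma(x)\wedge 1$ and an outer complement, gives $u\approx\delta_\Gamma^{\alpha/2}(x)\wedge 1$); (v) check that the two regimes match on $|x|\approx 1$ and assemble (\ref{eq:reduce}). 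The main obstacle is step (iii), the vertex asymptotics: one must show the exponent is exactly $\beta$ and not merely trapped between crude bounds, and this requires the boundary Harnack principle at the vertex together with the scale-invariance $r\Gamma=\Gamma$ to bootstrap the comparison $u\approx M$ near $0$; controlling the upper bound uniformly as $x\to 0$ — equivalently, showing $M$ is (up to constants) the largest normalized nonnegative function killed outside $\Gamma$ with the right decay — is the delicate point, and it is essentially the improvement over \cite[Lemma 4.2]{MR2075671} that the lemma advertises.
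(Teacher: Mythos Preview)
Your outline is correct in its skeleton---reduce to $t=1$ by scaling, then split into $|x|<1$ and $|x|\ge 1$---but it is considerably more laborious than the paper's proof, and you have misidentified where the new content lies.

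For $|x|<1$ the paper does not redo any boundary Harnack or chaining argument: it simply invokes \cite[Lemma~4.2]{MR2075671}, which already asserts $P_x(\tau_\Gamma>1)\approx M(x)$ for $|x|$ bounded, and then rewrites $M(x)$ via (\ref{eq:ojmcc}). Your step~(iii), which you flag as ``the main obstacle'' and ``essentially the improvement over \cite[Lemma~4.2]{MR2075671},'' is in fact the part that is already done and merely cited. The strengthening the lemma advertises is the extension to $|x|\ge 1$, not the vertex asymptotics.

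For $|x|\ge 1$ the paper's device is cleaner than your proposed local splitting into $\delta_\Gamma(x)\ge 1$ versus $\delta_\Gamma(x)<1$: one chooses $C^{1,1}$ domains $\Gamma'\subset\Gamma\subset\Gamma''$ with $\Gamma''\setminus\Gamma'\subset B(0,1/2)$, so that for $|x|\ge 1$ the three distance functions are mutually comparable, and then applies Corollary~1 (equation (\ref{eq:CKS0})) to $\Gamma'$ and $\Gamma''$ simultaneously. This sandwiches $P_x(\tau_\Gamma>1)$ between two quantities both comparable to $1\wedge\delta_\Gamma^{\alpha/2}(x)$, with no need to treat the interior and boundary layers separately or to worry about excursions reaching the vertex region. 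Your approach would work, but the sandwich trick disposes of the case in two lines.
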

\begin{proof}
Since $P_x(\tau_\Gamma>t)=P_{t^{-1/\alpha}x}(\tau_\Gamma>1)$,
we only need to prove that
  \begin{equation}
    \label{eq:eetG}
P_x(\tau_\Gamma>1) \approx \left(\delta_\Gamma^{\alpha/2}(x)\wedge
1 \right) \left(|x|\wedge
  1\right)^{\beta-\alpha/2}\,,
\quad x\in \Rd\,.
\end{equation}
If $|x|<1$ then (\ref{eq:eetG})
is a consequence of (\ref{eq:ojmcc}) and \cite[Lemma
4.2]{MR2075671}. If $|x|\geq 1$ then $P_x(\tau_\Gamma>1)\approx
\delta_\Gamma(x)^{\alpha/2}\wedge 1$. Indeed, considering
$C^{1,1}$ domains $\Gamma'$ and $\Gamma''$ such that
$\Gamma'\subset \Gamma \subset \Gamma''$ and $\Gamma''\setminus
\Gamma'\subset B(0,1/2)$, we see that $\delta_{\Gamma'}(x)\leq
\delta_{\Gamma}(x)\leq \delta_{\Gamma''}(x)\leq
2\delta_{\Gamma'}(x)$ for such $x$. Since
$P_x(\tau_{\Gamma'}>1)\leq P_x(\tau_{\Gamma}>1)\leq
P_x(\tau_{\Gamma''}>1)$, by using (\ref{eq:CKS0}) we obtain
(\ref{eq:eetG}).
\end{proof}
An interesting, if trivial, consequence of (\ref{eq:eetGs}) is that
\begin{equation}
  \label{eq:iit}
P_x(\tau_\Gamma>t) \approx P_x(\tau_\Gamma>t/2)\,,\quad t>0\,,\; x\in \Rd\,.
\end{equation}

\begin{thm}\label{prop:ppu}
  \begin{equation}
    \label{eq:ppu}
p^\Gamma_t(x,y)\approx P_x(\tau_\Gamma>t) P_y(\tau_\Gamma>t)
p_t(x,y)\,,\quad x,y\in\Rd\,,\;t>0\,.
  \end{equation}
\end{thm}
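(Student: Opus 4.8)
The plan is to follow the scheme of \cite{CKS2008}, feeding in the cone–specific inputs Lemma~\ref{lem:eetG} and (\ref{eq:ojmcc}) and exploiting the self-similarity $\lambda\Gamma=\Gamma$, $\lambda>0$. By (\ref{eq:scG}) — more flexibly, by $p^\Gamma_t(x,y)=\lambda^d p^\Gamma_{\lambda^\alpha t}(\lambda x,\lambda y)$ — and the matching scalings of $p_t$ and of the survival probability, it suffices to treat $t=1$. Write $\delta(x)=\delta_\Gamma(x)$ and $h(x)=(\delta(x)^{\alpha/2}\wedge 1)(|x|\wedge 1)^{\beta-\alpha/2}$, so that $P_x(\tau_\Gamma>1)\approx h(x)$ by Lemma~\ref{lem:eetG} and $h(x)\approx M(x)$ for $|x|\le 1$ by (\ref{eq:ojmcc}); the goal becomes $p^\Gamma_1(x,y)\approx h(x)h(y)p_1(x,y)$.

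For the upper bound I would first establish the one-sided estimate $p^\Gamma_1(x,y)\le c\,h(x)\,p_1(x,y)$. By scaling this upgrades to $p^\Gamma_t(x,y)\le c\,P_x(\tau_\Gamma>t)p_t(x,y)$ for all $t>0$; inserting it at $t=1/2$ into both factors of the Chapman--Kolmogorov identity $p^\Gamma_1(x,y)=\int_\Rd p^\Gamma_{1/2}(x,z)p^\Gamma_{1/2}(z,y)\,dz$, using the symmetry of $p^\Gamma$ and the remark (\ref{eq:iit}) that $P_x(\tau_\Gamma>1/2)\approx P_x(\tau_\Gamma>1)$, yields $p^\Gamma_1(x,y)\le c\,h(x)h(y)\,p_1(x,y)$. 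To prove the one-sided estimate I split on the location of $x$: if $\delta(x)\gtrsim|x|\wedge1$ then $h(x)\approx 1$ and it is just $p^\Gamma_1\le p_1$; if $|x|\ge 1$, I sandwich $\Gamma$ between $C^{1,1}$ domains $\Gamma'\subset\Gamma\subset\Gamma''$ agreeing with $\Gamma$ off $B(0,1/2)$, so $p^\Gamma_1\le p^{\Gamma''}_1$, $\delta_{\Gamma''}(x)\approx\delta(x)$, and the upper half of (\ref{eq:CKS}) together with the $|x|\ge1$ part of Lemma~\ref{lem:eetG} finishes it; and if $|x|<1$ with $x$ near the lateral boundary — the vertex region — I reduce, using self-similarity, to the dyadic shells $2^{-k-1}\le|x|<2^{-k}$ and compare $x\mapsto p^\Gamma_1(x,y)$ with the harmonic profile $M$, the comparison being provided by the boundary Harnack principle for $\Gamma$ (uniform in $k$ because $\lambda\Gamma=\Gamma$) and by (\ref{eq:CKS}) in the smooth part of each shell; this last step is in essence the analysis of \cite{MR2075671}.

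For the lower bound I would prove $p^\Gamma_1(x,y)\ge c\,h(x)h(y)p_1(x,y)$ directly, since no one-sided lower bound is available. The backbone is an interior lower bound: for a suitable small $\kappa\in(0,1)$, $\delta(x)\wedge\delta(y)\ge\kappa(|x-y|\vee1)$ forces $p^\Gamma_1(x,y)\ge c\,p_1(x,y)$ — via a parabolic Harnack chain inside the convex set $\Gamma$ through $O(1)$ balls of radius $\approx\kappa(|x-y|\vee1)$ (inserting a single big jump if $|x-y|$ is large), or by comparison with an inscribed ball and the lower half of (\ref{eq:CKS}) — and there $h(x)\approx h(y)\approx1$. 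In general I pick interior reference boxes $B_x,B_y$ near $x,y$ at their own scales (centres at distance $\le c(|x|\wedge1)$ from $x$, inradius $\approx|x|\wedge1$, symmetrically for $y$) and use
\[
p^\Gamma_1(x,y)\ge\int_{B_x}\int_{B_y}p^\Gamma_{t_x}(x,z)\,p^\Gamma_{s}(z,w)\,p^\Gamma_{t_y}(w,y)\,dz\,dw
\]
with $t_x=\tfrac14(|x|\wedge1)^\alpha$, $t_y=\tfrac14(|y|\wedge1)^\alpha$, $s=1-t_x-t_y\in[\tfrac12,1]$. The first and last transitions are bounded below by $c\,h(x)$ and $c\,h(y)$ — as in the proof of (\ref{eq:CKS0}), by comparison with the inscribed tangent ball of an approximating $C^{1,1}$ domain rescaled to size $|x|\wedge1$, using (\ref{eq:iit}) to identify the survival probabilities — while the middle one is bounded below by $c\,p_{s}(z,w)\approx c\,p_1(x,y)$ through the interior estimate, with $O(1)$ further interior boxes of sizes $\approx2^j(|x|\wedge1)$ strung along the axis of $\Gamma$ (outward, then inward) to bridge the scales of $x$ and $y$ when they differ. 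Multiplying gives the claim.

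I expect the vertex to be where the real work lies. Away from it $\Gamma$ is $C^{1,1}$ and both bounds are rescaled copies of \cite{CKS2008}; at the vertex the nonlocal decay exponent $\beta$ enters, and reproducing the factor $(|x|\wedge1)^{\beta-\alpha/2}$ with matching upper and lower constants genuinely forces one to combine the sharp two-sided survival estimate of Lemma~\ref{lem:eetG} (hence \cite[Lemma~4.2]{MR2075671}), the Martin-kernel asymptotics (\ref{eq:ojmcc}), and the scale-invariant boundary Harnack principle, carefully patched across the dyadic shells — the bridging of disparate scales of $x$ and $y$ in the lower bound being the most delicate point.
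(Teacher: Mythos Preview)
Your scaffolding (scaling to $t=1$, proving the one-sided upper bound and bootstrapping via Chapman--Kolmogorov and (\ref{eq:iit}), and the $C^{1,1}$ sandwich for $|x|\ge 1$) is exactly the paper's. The differences are at the vertex, and there your proposal has a real gap.

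For the upper bound with $|x|<1$ and $|y|$ large you propose to ``compare $x\mapsto p^\Gamma_1(x,y)$ with the harmonic profile $M$'' via BHP on dyadic shells. But $p^\Gamma_1(\cdot,y)$ is not harmonic, so BHP does not apply to it directly; something must be made harmonic first. The paper does this concretely: it stops the process at $\tau_{\Gamma_1}$ with $\Gamma_1=\Gamma\cap B(0,2)$, uses the L\'evy system (\ref{eq:Ls}) to write $p^\Gamma_1(x,y)$ as a double integral over the jump location, splits the landing point into a middle shell and a far region, and then applies BHP to the genuinely harmonic quantity $x\mapsto P_x(X_{\tau_{\Gamma_1}}\in\Gamma\setminus\Gamma_1)$ together with $E_x\tau_{\Gamma_1}\le cM(x)$ from \cite[Lemma~4.6]{MR2075671}. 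Your sketch gestures at \cite{MR2075671} but does not supply this reduction.

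The lower bound is where the argument actually breaks. With $t_x\approx|x|^\alpha$ your box $B_x$ sits at scale $|x|$, so the reference point $z\in B_x$ has $|z|\approx\delta(z)\approx|x|$ and hence $h(z)\approx|x|^\beta$. By the upper bound you have just proved, $p^\Gamma_s(z,w)\le c\,|x|^\beta p_s(z,w)$ for $s\approx 1$ and $w$ at unit scale, so the ``interior'' lower bound $p^\Gamma_s(z,w)\ge c\,p_s(z,w)$ is simply false there. Your fix---a Harnack chain of boxes of sizes $2^j(|x|\wedge 1)$ along the axis---cannot have $O(1)$ links: bridging scale $|x|$ to scale $1$ needs $\sim\log_2(1/|x|)$ steps, and the product of constants degenerates like $|x|^{c'}$ with an uncontrolled exponent, not the required $|x|^\beta$. (A single-jump argument gives only $|x|^\alpha$, which is also too small since $\beta<\alpha$.) The paper sidesteps this entirely: for $|x|<1$, $|y|\le 2$ it invokes intrinsic ultracontractivity of $p^{\Gamma_4}$ on the bounded truncated cone $\Gamma_4=\Gamma\cap B(0,4)$ to get $p^\Gamma_{1/2}(x,y)\ge c\,E_x\tau_{\Gamma_4}\,E_y\tau_{\Gamma_4}$, and then \cite[Lemma~4.6]{MR2075671} converts these expected exit times into $M\approx h$; the case $|y|>2$ follows by a single semigroup step through $z\in(\Gamma\cap B(0,2))\setminus B(0,1)$, using the already-established estimate (\ref{eq:ppu1}). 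Intrinsic ultracontractivity is precisely the tool that produces the correct exponent $\beta$ at the vertex without any chain argument.
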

\begin{proof}
We note that the right hand side, say $R_t(x,y)$, of (\ref{eq:ppu}) satisfies
$$
R_t(x,y)=t^{-d/\alpha}R_1(t^{-1/\alpha}x,t^{-1/\alpha}y)\,.
$$
Thus, in view of (\ref{eq:scG}), we only need to prove
(\ref{eq:ppu}) for $t=1$.

Let $\Gamma'$ and $\Gamma''$ be as in the proof of Lemma
\ref{lem:eetG}. Then
\begin{equation*}p^{\Gamma'}_1(x,y)\leq p^{\Gamma}_1(x,y)\leq
p^{\Gamma''}_1(x,y).
\end{equation*}
By (\ref{eq:CKS}) we have, for $|x|,|y|\geq 1$,
$$p^{\Gamma'}_1(x,y)\approx \left(1\wedge\delta_{\Gamma}^{\alpha/2}(x)\right)
\left(1\wedge\delta_{\Gamma}^{\alpha/2}(y)\right) p_1(x,y)
\approx p^{\Gamma''}_1(x,y)\,.$$
Hence by Lemma \ref{lem:eetG} we obtain,
\begin{equation}\label{eq:ppu1} p^{\Gamma}_1(x,y)\approx P_x(\tau_\Gamma>1)
P_y(\tau_\Gamma>1) p_1(x,y)\,,\quad |x|,|y|\geq 1.
\end{equation}
In particular, there is a constant $c$ such that
\begin{equation}\label{eq:ppu2}
p^{\Gamma}_1(x,y)\leq c P_x(\tau_\Gamma>1) p_1(x,y)\,,\quad |x|,|y|\geq 1\,.
\end{equation}
If $|x|< 1$ and $|y|\leq 4$,
then $|x-y|<5$ and
$p_1(x,y)\geq c(1\wedge |x-y|^{-d-\alpha})\geq c$. By
the semigroup property (\ref{eq:**}) and (\ref{eq:iit}),
\begin{eqnarray}\label{eq:ppu3}
p^{\Gamma}_1(x,y)
&=&\int_{\Gamma}p^{\Gamma}_{1/2}(x,w)p^{\Gamma}_{1/2}(w,y)dw\leq c \int_{\Gamma}p^{\Gamma}_{1/2}(x,w)dw\notag\\ &=& c
P_x(\tau_{\Gamma}>1/2)\leq cP_x(\tau_{\Gamma}>1)\notag\\&\leq& c
P_x(\tau_{\Gamma}>1)p_1(x,y)\,, \quad |x|< 1\,,\;|y|\leq 4\,.
\end{eqnarray}
We next assume that $|x|< 1$ and $|y|>4$. Then $|x-y|>3$ and
$p_1(x,y)\approx |x-y|^{-d-\alpha}$. Denote $\Gamma_1=\Gamma\cap
B(0,2)$, $\Gamma_2=(\Gamma\setminus\Gamma_1)\cap B(0,(|y|+1)/2) $
and $\Gamma_3=\Gamma\setminus B(0,(|y|+1)/2)$. Using the strong
Markov property and the L\'evy system (\ref{eq:Ls}) with
$f(s,z,w)=
\textbf{1}_{\Gamma_1}(z)
\textbf{1}_{\Gamma_1^c}(w)
p^{\Gamma}_{1-s}(w,y)$ 
and
$T=1\wedge \tau_{\Gamma_1}$, 
we obtain
\begin{eqnarray*}
p^{\Gamma}_1(x,y)
&=&E_x[\tau_{\Gamma_1}<1;p^{\Gamma}_{1-\tau_{\Gamma_1}}(X_{\tau_{\Gamma_1}},y)]\nonumber\\
&=&\int^1_0\int_{\Gamma_1}p^{\Gamma_1}_s(x,z)\int_{\Gamma\setminus\Gamma_1}
\nu(w-z)p^{\Gamma}_{1-s}(w,y)\,dwdz
ds\\
&=&\int^1_0\int_{\Gamma_1}p^{\Gamma_1}_s(x,z)\int_{\Gamma_2}
\nu(w-z)p^{\Gamma}_{1-s}(w,y)\,dwdzds\nonumber\\
& &+
\int^1_0\int_{\Gamma_1}p^{\Gamma_1}_s(x,z)\int_{\Gamma_3}
\nu(w-z)p^{\Gamma}_{1-s}(w,y)\,dwdzds\nonumber\\
&=&I+II\,.\nonumber
\end{eqnarray*}
We note that for $w\in \Gamma_2$,
$$|w-y|\geq |y|-|w|\geq |y|/4 \geq |x-y|/8\,.$$
Since $p^{\Gamma_1}_{1-s}(w-y)\leq p_{1-s}(w-y)\leq
c(1-s)|w-y|^{-d-\alpha}$, we obtain
\begin{eqnarray*}
I&\leq&\int^1_0\int_{\Gamma_1}p^{\Gamma_1}_s(x,z)
\int_{\Gamma_2}\nu(w-z)c \frac{1-s}{|w-y|^{d+\alpha}}\,dwdzds\\
&\leq &
c|x-y|^{-d-\alpha}\int^1_0\int_{\Gamma_1}p^{\Gamma_1}_s(x,z)
\int_{\Gamma\setminus\Gamma_1}\nu(w-z)\,dwdzds\\
&=& c|x-y|^{-d-\alpha}P_x(X_{\tau_{\Gamma_1}}\in
\Gamma\setminus\Gamma_1,\tau_{\Gamma_1}\leq 1)\\
&\leq&
c|x-y|^{-d-\alpha}P_x(X_{\tau_{\Gamma_1}}\in
\Gamma\setminus\Gamma_1)\approx M(x)p_1(x,y)\,.
\end{eqnarray*}
In the last line we used BHP (\cite{MR2075671}).
For $z\in \Gamma_1$ and $w\in \Gamma_3$, we have
$$|w-z|\geq |w|-|z|\geq |y|/2-3/2\geq |y|/8\geq |x-y|/16,$$
hence $\nu(w-z)\leq c |x-y|^{-d-\alpha}$, and so
\begin{eqnarray*}
II&\leq &c|x-y|^{-d-\alpha}
\int^1_0\int_{\Gamma_1}p^{\Gamma_1}_s(x,z)
\int_{\Gamma_3}p^{\Gamma}_{1-s}(w,y)\,dwdzds\\
&\leq&c|x-y|^{-d-\alpha}
\int^1_0\int_{\Gamma_1}p^{\Gamma_1}_s(x,z)dzds\leq
c|x-y|^{-d-\alpha}E_x\tau_{\Gamma_1}\\&\leq& cp_1(x,y)M(x),
\end{eqnarray*}
where the last inequality follows from (\ref{eq:ppfe}) and \cite[Lemma 4.6]{MR2075671}.
By Lemma \ref{lem:eetG},
\begin{equation}\label{eq:ppu4} p^{\Gamma}_1(x,y)\leq c
P_x(\tau_\Gamma>1) p_1(x,y), \quad |x|<1,|y|>4.
\end{equation}
Combining (\ref{eq:ppu2}), (\ref{eq:ppu3}) and (\ref{eq:ppu4}),  we get
\begin{equation*}
p^{\Gamma}_1(x,y)
\leq c P_x(\tau_\Gamma>1) p_1(x,y), \quad x,y\in
\Rd.
\end{equation*}
By the symmetry, semigroup property and (\ref{eq:iit}) we obtain
\begin{eqnarray*}p^{\Gamma}_1(x,y)&=&\int_{\Gamma}p^{\Gamma}_{1/2}(x,w)p^{\Gamma}_{1/2}(w,y)dw\\
&=&\int_{\Gamma}2^{2d/\alpha}p^{\Gamma}_{1}(x2^{1/\alpha},w2^{1/\alpha})p^{\Gamma}_{1}(w2^{1/\alpha},y2^{1/\alpha})dw\\
&\leq& c P_{x2^{1/\alpha}}(\tau_{\Gamma}>1)P_{y2^{1/\alpha}}(\tau_{\Gamma}>1)\int_{\Rd}p_{1/2}(x,w)p_{1/2}(w,y)dw\\
&\leq& c P_{x}(\tau_{\Gamma}>1)P_{y}(\tau_{\Gamma}>1)p_{1}(x,y).
\end{eqnarray*}

We will now prove the lower bound in (\ref{eq:ppu}).
We first assume that $|x|< 1$ and $|y|\leq 2$, and we
let $\Gamma_4=\Gamma\cap B(0,4)$. Since $\Gamma_4$ is bounded,
the semigroup $p^{\Gamma_4}_t$ is intrinsically ultracontractive (\cite{MR1643611}).
In particular,
\begin{equation*}
p^{\Gamma}_{1/2}(x,y)\geq p^{\Gamma_4}_{1/2}(x,y)\geq c
E_x\tau_{\Gamma_4}E^y\tau_{\Gamma_4}\,.
\end{equation*}
Furthermore, by \cite[Lemma 4.6]{MR2075671} and Lemma \ref{lem:eetG} we obtain
$$E^y\tau_{\Gamma_4}\geq c M(y)\geq c P_y(\tau_{\Gamma}>1)\,.$$
We see that
\begin{equation}\label{eq:ppd1}
p^{\Gamma}_{1/2}(x,y)\geq c
P_x(\tau_{\Gamma}>1)P_y(\tau_{\Gamma}>1)p_1(x,y),\quad
|x|<1,\;|y|\leq 2\,.
\end{equation}
If $|x|<1$ and $|y|>2$, then by the semigroup property,
(\ref{eq:ppu1}) and (\ref{eq:ppd1}),
\begin{eqnarray*}p^{\Gamma}_1(x,y)&=&\int_{\Gamma}p^{\Gamma}_{1/2}(x,z)p^{\Gamma}_{1/2}(z,y)dz\\
&\geq&
c\,P_x(\tau_{\Gamma}>1)P_y(\tau_{\Gamma}>1)
\int_{\Gamma_1\setminus B(0,1)}P_z(\tau_{\Gamma}>1)^2p_1(x,z)p_1(z,y)dz\\
&\geq& c\,
P_x(\tau_{\Gamma}>1)P_y(\tau_{\Gamma}>1)p_1(x,y)\int_{\Gamma_1\setminus
  B(0,1)}P_z(\tau_{\Gamma}>1)^2dz\,.
\end{eqnarray*}
Hence
\begin{equation}\label{eq:ppd2}
p^{\Gamma}_1(x,y)\geq c
P_x(\tau_{\Gamma}>1)P_y(\tau_{\Gamma}>1)p_1(x,y),\quad
|x|<1,|y|>2.
\end{equation}
By (\ref{eq:ppu1}), (\ref{eq:ppd1}), (\ref{eq:ppd2}), symmetry (and scaling), we get
the lower bound in (\ref{eq:ppu}).
\end{proof}
We like to note that Theorem~\ref{prop:ppu} strengthens \cite[Corollary 4.8]{MR2075671}.
Also,
\begin{equation*}
p_t^\Gamma(x,y)\approx p_{t/2}^\Gamma(x,y)\,,\quad x,y\in \Rd\,,\; t>0\,.
\end{equation*}

In view of  Lemma~\ref{lem:eetG}, for the right circular cone
$\Gamma$, (\ref{eq:ppu}) is equivalent to
\begin{eqnarray}
\!\!p^\Gamma_t(x,y)\!\!\!\!\!\!
&\approx&\!\!\!\!\!\!
\left(\delta_\Gamma^{\alpha/2}(t^{-1/\alpha}x)\wedge
1\right)\left(|t^{-1/\alpha}x|\wedge 1\right)^{\beta-\alpha/2}\; 
\left( t^{-d/\alpha}\wedge \frac{t}{|x-y|^{d+\alpha}}\right)
\label{eq:etdG2}\\
&&\left( \delta_\Gamma^{\alpha/2}(t^{-1/\alpha}y)\wedge
1\right)\left(|t^{-1/\alpha}y|\wedge 1\right)^{\beta-\alpha/2}\,,\quad
t>0\,,\;x,y\in \Rd\,. \nonumber
\end{eqnarray}
This is explicit except for the exponent $\beta$ (see \cite{MR2075671} in this connection).
Recall that $\int_0^\infty p_t^\Gamma(x,y)dt=G_\Gamma(x,y)$, the Green function of $\Gamma$.
By integrating (\ref{eq:etdG2}) one can obtain sharp estimates for
the Green function of the right circular cone.
For $d\geq 2$ the estimates--first given in \cite[Theorem 3.10]{MR2213639}--are the following, 
\begin{equation}
  \label{eq:eGf}
\frac{G_\Gamma(x,y)}
{|x-y|^{\alpha-d}}
\approx
1\wedge
\left\{
\frac{\delta^{\alpha/2}_\Gamma(x)\delta^{\alpha/2}_\Gamma(y)}{|y-x|^\alpha}
\left(
\frac{|x|\wedge |y|}{|x|\vee |y|}
\right)^{\beta-\alpha/2}
\right\}
\,,\quad x,y\in \Rd\,.
\end{equation}
We skip the details of the integration (similar calculations are given
in \cite{MR2255353} and \cite{CKS2008}).
Noteworthy, $\beta=\alpha/2$ if $\Gamma$ is a half-space \cite{MR2075671}.
For the case of dimension
$d=1$, and $\Gamma=(0,\infty)$, we refer the reader to
\cite{MR1911445}, see also \cite[Corollary 1.2]{CKS2008}. 

As stated in Introduction, we expect (\ref{eq:*g}) to be true quite
generally. In particular the approximation should hold for domains above the graph of a Lipschitz
function for all times $t>0$. Corollary~\ref{cor:c11} confirms this conjecture for
$C^{1,1}$ domains and small times, while Theorem~\ref{prop:ppu} proves
it for the right circular cones and all times. 
By inspecting the relevant proofs in \cite{MR2213639}, the reader may also verify
without difficulty that Theorem~\ref{prop:ppu}
and (\ref{eq:eGf}) hold the same for all those {\it generalized} cones (\cite{MR2075671}) 
in $\Rd$, $d\geq 2$, which are $C^{1,1}$ except at the origin. 

On the other hand, if $D$ is a bounded $C^{1,1}$ domain, and if we
denote by $-\lambda_1$ the first eigenvalue of $\Delta^{\alpha/2}$ on
$D$ (i.e. when acting on functions vanishing off $D$),
then by the intrinsic ultracontractivity (see, e.g., \cite[Theorem 1.1]{CKS2008}),
\begin{equation*}
    p_t^D(x,y)\approx \delta_D^{\alpha/2}(x)\delta^{\alpha/2}(y) e^{-\lambda_1 t}\,,\quad t>1\,,\;x,y\in \Rd\,,
\end{equation*}
and so
  \begin{equation*}
    P_x(\tau_D>t)\approx \delta_D^{\alpha/2}(x) e^{-\lambda_1 t}\,,\quad t>1\,,\;x\in \Rd\,.
  \end{equation*}
Therefore (\ref{eq:*g}) fails for large times $t$ if $D$ is bounded
(see also \cite{MR1643611}). 


\noindent
{\bf Acknowledgments.} We thank Micha{\l} Ryznar and Mateusz
Kwa\'snicki for discussions and suggestions.

\bibliographystyle{abbrv}

\begin{thebibliography}{10}

\bibitem{MR2153910}
H.~Aikawa and T.~Lundh.
\newblock The 3{G} inequality for a uniformly {J}ohn domain.
\newblock {\em Kodai Math. J.}, 28(2):209--219, 2005.

\bibitem{MR2075671}
R.~Ba{\~n}uelos and K.~Bogdan.
\newblock Symmetric stable processes in cones.
\newblock {\em Potential Anal.}, 21(3):263--288, 2004.

\bibitem{MR2345912}
R.~Ba{\~n}uelos and K.~Bogdan.
\newblock L\'evy processes and {F}ourier multipliers.
\newblock {\em J. Funct. Anal.}, 250(1):197--213, 2007.

\bibitem{MR2438694}
R.~Ba{\~n}uelos and T.~Kulczycki.
\newblock Trace estimates for stable processes.
\newblock {\em Probab. Theory Related Fields}, 142(3-4):313--338, 2008.

\bibitem{MR1465162}
R.~Ba{\~n}uelos and R.~G. Smits.
\newblock Brownian motion in cones.
\newblock {\em Probab. Theory Related Fields}, 108(3):299--319, 1997.

\bibitem{2008-MB-RB-TK}
M.~Barlow, A.~Grigor'yan, and T.~Kumagai.
\newblock Heat kernel upper bounds for jump processes and the first exit time.
\newblock to appear Math. Z.

\bibitem{MR0481057}
C.~Berg and G.~Forst.
\newblock {\em Potential theory on locally compact abelian groups}.
\newblock Springer-Verlag, New York, 1975.
\newblock Ergebnisse der Mathematik und ihrer Grenzgebiete, Band 87.

\bibitem{MR1406564}
J.~Bertoin.
\newblock {\em L\'evy processes}, volume 121 of {\em Cambridge Tracts in
  Mathematics}.
\newblock Cambridge University Press, Cambridge, 1996.

\bibitem{MR119247}
R.~M. Blumenthal and R.~K. Getoor.
\newblock Some theorems on stable processes.
\newblock {\em Trans. Amer. Math. Soc.}, 95:263--273, 1960.

\bibitem{MR0264757}
R.~M. Blumenthal and R.~K. Getoor.
\newblock {\em Markov processes and potential theory}.
\newblock Pure and Applied Mathematics, Vol. 29. Academic Press, New York,
  1968.

\bibitem{MR1741527}
K.~Bogdan.
\newblock Sharp estimates for the {G}reen function in {L}ipschitz domains.
\newblock {\em J. Math. Anal. Appl.}, 243(2):326--337, 2000.

\bibitem{MR2006232}
K.~Bogdan, K.~Burdzy, and Z.-Q. Chen.
\newblock Censored stable processes.
\newblock {\em Probab. Theory Related Fields}, 127(1):89--152, 2003.

\bibitem{MR1671973}
K.~Bogdan and T.~Byczkowski.
\newblock Potential theory for the {$\alpha$}-stable {S}chr\"odinger operator
  on bounded {L}ipschitz domains.
\newblock {\em Studia Math.}, 133(1):53--92, 1999.

\bibitem{MR1825645}
K.~Bogdan and T.~Byczkowski.
\newblock Potential theory of {S}chr\"odinger operator based on fractional
  {L}aplacian.
\newblock {\em Probab. Math. Statist.}, 20(2, Acta Univ. Wratislav. No.
  2256):293--335, 2000.

\bibitem{KB-TB-MR-TK-RS-ZV}
K.~Bogdan, T.~Byczkowski, K.~Tadeusz, M.~Ryznar, R.~Song, and
  Z.~Vondra{\v{c}}ek.
\newblock Potential analysis of stable processes and its extensions.
\newblock Based on lectures given on the CNRS/HARP Workshop Stochastic and
  Harmonic Analysis of Processes with Jumps Angers, May 2-9, 2006.

\bibitem{2008KBTJ}
K.~Bogdan, W.~Hansen, and T.~Jakubowski.
\newblock Time-dependent {S}chr{\"o}dinger perturbations of transition
  densities.
\newblock {\em Studia Mathematica}, 189(3):235--254, 2008.

\bibitem{MR2182071}
K.~Bogdan and T.~Jakubowski.
\newblock Probl\`eme de {D}irichlet pour les fonctions {$\alpha$}-harmoniques
  sur les domaines coniques.
\newblock {\em Ann. Math. Blaise Pascal}, 12(2):297--308, 2005.

\bibitem{MR2283957}
K.~Bogdan and T.~Jakubowski.
\newblock Estimates of heat kernel of fractional {L}aplacian perturbed by
  gradient operators.
\newblock {\em Comm. Math. Phys.}, 271(1):179--198, 2007.

\bibitem{MR2365478}
K.~Bogdan, T.~Kulczycki, and M.~Kwa{\'s}nicki.
\newblock Estimates and structure of {$\alpha$}-harmonic functions.
\newblock {\em Probab. Theory Related Fields}, 140(3-4):345--381, 2008.

\bibitem{MR2013738}
K.~Bogdan, A.~St{\'o}s, and P.~Sztonyk.
\newblock Harnack inequality for stable processes on {$d$}-sets.
\newblock {\em Studia Math.}, 158(2):163--198, 2003.

\bibitem{MR2320691}
K.~Bogdan and P.~Sztonyk.
\newblock Estimates of the potential kernel and {H}arnack's inequality for the
  anisotropic fractional {L}aplacian.
\newblock {\em Studia Math.}, 181(2):101--123, 2007.

\bibitem{MR799436}
K.~Burdzy.
\newblock Brownian paths and cones.
\newblock {\em Ann. Probab.}, 13(3):1006--1010, 1985.

\bibitem{MR0474525}
D.~L. Burkholder.
\newblock Exit times of {B}rownian motion, harmonic majorization, and {H}ardy
  spaces.
\newblock {\em Advances in Math.}, 26(2):182--205, 1977.

\bibitem{MR1911445}
H.~Byczkowska and T.~Byczkowski.
\newblock One-dimensional symmetric stable {F}eynman-{K}ac semigroups.
\newblock {\em Probab. Math. Statist.}, 21(2, Acta Univ. Wratislav. No.
  2328):381--404, 2001.

\bibitem{MR1942325}
Z.-Q. Chen and P.~Kim.
\newblock Green function estimate for censored stable processes.
\newblock {\em Probab. Theory Related Fields}, 124(4):595--610, 2002.

\bibitem{CKS2008}
Z.-Q. Chen, P.~Kim, and R.~Song.
\newblock Heat kernel estimates for {D}irichlet fractional {L}aplacian.
\newblock to appear in J. European Math. Soc.

\bibitem{CKS2008censored}
Z.-Q. Chen, P.~Kim, and R.~Song.
\newblock Two-sided heat kernel estimates for censored stable-like processes.
\newblock to appear in Probab. Theory Related Fields, 2008.

\bibitem{MR2357678}
Z.-Q. Chen and T.~Kumagai.
\newblock Heat kernel estimates for jump processes of mixed types on metric
  measure spaces.
\newblock {\em Probab. Theory Related Fields}, 140(1-2):277--317, 2008.

\bibitem{MR1654824}
Z.-Q. Chen and R.~Song.
\newblock Estimates on {G}reen functions and {P}oisson kernels for symmetric
  stable processes.
\newblock {\em Math. Ann.}, 312(3):465--501, 1998.

\bibitem{MR2255354}
S.~Cho.
\newblock Two-sided global estimates of the {G}reen's function of parabolic
  equations.
\newblock {\em Potential Anal.}, 25(4):387--398, 2006.

\bibitem{MR902422}
K.~L. Chung.
\newblock Green's function for a ball.
\newblock In {\em Seminar on stochastic processes, 1986 ({C}harlottesville,
  {V}a., 1986)}, volume~13 of {\em Progr. Probab. Statist.}, pages 1--13.
  Birkh\"auser Boston, Boston, MA, 1987.

\bibitem{MR879702}
E.~B. Davies.
\newblock The equivalence of certain heat kernel and {G}reen function bounds.
\newblock {\em J. Funct. Anal.}, 71(1):88--103, 1987.

\bibitem{MR863716}
R.~D. DeBlassie.
\newblock Exit times from cones in {${\bf R}\sp n$} of {B}rownian motion.
\newblock {\em Probab. Theory Related Fields}, 74(1):1--29, 1987.

\bibitem{MR1062058}
R.~D. DeBlassie.
\newblock The first exit time of a two-dimensional symmetric stable process
  from a wedge.
\newblock {\em Ann. Probab.}, 18(3):1034--1070, 1990.

\bibitem{MR745449}
C.~Dellacherie and P.-A. Meyer.
\newblock {\em Probabilities and potential. {B}}, volume~72 of {\em
  North-Holland Mathematics Studies}.
\newblock North-Holland Publishing Co., Amsterdam, 1982.
\newblock Theory of martingales, Translated from the French by J. P. Wilson.

\bibitem{MR2085428}
B.~Dyda.
\newblock A fractional order {H}ardy inequality.
\newblock {\em Illinois J. Math.}, 48(2):575--588, 2004.

\bibitem{MR2430977}
A.~Grigor{\cprime}yan and J.~Hu.
\newblock Off-diagonal upper estimates for the heat kernel of the {D}irichlet
  forms on metric spaces.
\newblock {\em Invent. Math.}, 174(1):81--126, 2008.

\bibitem{MR657523}
M.~Gr{\"u}ter and K.-O. Widman.
\newblock The {G}reen function for uniformly elliptic equations.
\newblock {\em Manuscripta Math.}, 37(3):303--342, 1982.

\bibitem{MR2417435}
T.~Grzywny and M.~Ryznar.
\newblock Estimates of {G}reen functions for some perturbations of fractional
  {L}aplacian.
\newblock {\em Illinois J. Math.}, 51(4):1409--1438, 2007.

\bibitem{MR2386098}
T.~Grzywny and M.~Ryznar.
\newblock Two-sided optimal bounds for {G}reen functions of half-spaces for
  relativistic {$\alpha$}-stable process.
\newblock {\em Potential Anal.}, 28(3):201--239, 2008.

\bibitem{MR2238879}
Q.-Y. Guan.
\newblock Integration by parts formula for regional fractional {L}aplacian.
\newblock {\em Comm. Math. Phys.}, 266(2):289--329, 2006.

\bibitem{MR2160104}
W.~Hansen.
\newblock Uniform boundary {H}arnack principle and generalized triangle
  property.
\newblock {\em J. Funct. Anal.}, 226(2):452--484, 2005.

\bibitem{MR2207878}
W.~Hansen.
\newblock Global comparison of perturbed {G}reen functions.
\newblock {\em Math. Ann.}, 334(3):643--678, 2006.

\bibitem{MR1873235}
N.~Jacob.
\newblock {\em Pseudo differential operators and {M}arkov processes. {V}ol.
  {I}}.
\newblock Imperial College Press, London, 2001.
\newblock Fourier analysis and semigroups.

\bibitem{MR1991120}
T.~Jakubowski.
\newblock The estimates for the {G}reen function in {L}ipschitz domains for the
  symmetric stable processes.
\newblock {\em Probab. Math. Statist.}, 22(2, Acta Univ. Wratislav. No.
  2470):419--441, 2002.

\bibitem{MR1490808}
T.~Kulczycki.
\newblock Properties of {G}reen function of symmetric stable processes.
\newblock {\em Probab. Math. Statist.}, 17(2, Acta Univ. Wratislav. No.
  2029):339--364, 1997.

\bibitem{MR1643611}
T.~Kulczycki.
\newblock Intrinsic ultracontractivity for symmetric stable processes.
\newblock {\em Bull. Polish Acad. Sci. Math.}, 46(3):325--334, 1998.

\bibitem{MR2231884}
T.~Kulczycki and B.~Siudeja.
\newblock Intrinsic ultracontractivity of the {F}eynman-{K}ac semigroup for
  relativistic stable processes.
\newblock {\em Trans. Amer. Math. Soc.}, 358(11):5025--5057 (electronic), 2006.

\bibitem{2008-MK}
M.~Kwa{\'s}nicki.
\newblock Intrinsic ultracontractivity for stable semigroups on unbounded open
  sets.
\newblock preprint, 2008.

\bibitem{MR2280260}
P.~J. M{\'e}ndez-Hern{\'a}ndez.
\newblock Exit times of symmetric {$\alpha$}-stable processes from unbounded
  convex domains.
\newblock {\em Electron. J. Probab.}, 12:no. 4, 100--121 (electronic), 2007.

\bibitem{MR2213639}
K.~Michalik.
\newblock Sharp estimates of the {G}reen function, the {P}oisson kernel and the
  {M}artin kernel of cones for symmetric stable processes.
\newblock {\em Hiroshima Math. J.}, 36(1):1--21, 2006.

\bibitem{MR0208681}
S.~C. Port.
\newblock Hitting times for transient stable processes.
\newblock {\em Pacific J. Math.}, 21:161--165, 1967.

\bibitem{MR2238934}
M.~Rao, R.~Song, and Z.~Vondra{\v{c}}ek.
\newblock Green function estimates and {H}arnack inequality for subordinate
  {B}rownian motions.
\newblock {\em Potential Anal.}, 25(1):1--27, 2006.

\bibitem{MR1968386}
L.~Riahi.
\newblock Estimates of {G}reen functions and their applications for parabolic
  operators with singular potentials.
\newblock {\em Colloq. Math.}, 95(2):267--283, 2003.

\bibitem{MR1739520}
K.-i. Sato.
\newblock {\em L\'evy processes and infinitely divisible distributions},
  volume~68 of {\em Cambridge Studies in Advanced Mathematics}.
\newblock Cambridge University Press, Cambridge, 1999.
\newblock Translated from the 1990 Japanese original, Revised by the author.

\bibitem{MR2365348}
R.~L. Schilling and T.~Uemura.
\newblock On the {F}eller property of {D}irichlet forms generated by pseudo
  differential operators.
\newblock {\em Tohoku Math. J. (2)}, 59(3):401--422, 2007.

\bibitem{MR2255353}
B.~Siudeja.
\newblock Symmetric stable processes on unbounded domains.
\newblock {\em Potential Anal.}, 25(4):371--386, 2006.

\bibitem{MR0290095}
E.~M. Stein.
\newblock {\em Singular integrals and differentiability properties of
  functions}.
\newblock Princeton Mathematical Series, No. 30. Princeton University Press,
  Princeton, N.J., 1970.

\bibitem{MR1969798}
N.~T. Varopoulos.
\newblock Gaussian estimates in {L}ipschitz domains.
\newblock {\em Canad. J. Math.}, 55(2):401--431, 2003.

\bibitem{MR2286060}
T.~Watanabe.
\newblock Asymptotic estimates of multi-dimensional stable densities and their
  applications.
\newblock {\em Trans. Amer. Math. Soc.}, 359(6):2851--2879 (electronic), 2007.

\bibitem{MR1336382}
K.~Yosida.
\newblock {\em Functional analysis}.
\newblock Classics in Mathematics. Springer-Verlag, Berlin, 1995.
\newblock Reprint of the sixth (1980) edition.

\bibitem{MR1900329}
Q.~S. Zhang.
\newblock The boundary behavior of heat kernels of {D}irichlet {L}aplacians.
\newblock {\em J. Differential Equations}, 182(2):416--430, 2002.

\bibitem{MR1974093}
Q.~S. Zhang.
\newblock The global behavior of heat kernels in exterior domains.
\newblock {\em J. Funct. Anal.}, 200(1):160--176, 2003.

\bibitem{MR842803}
Z.~X. Zhao.
\newblock Green function for {S}chr\"odinger operator and conditioned
  {F}eynman-{K}ac gauge.
\newblock {\em J. Math. Anal. Appl.}, 116(2):309--334, 1986.

\end{thebibliography}
\def\cprime{$'$}

\noindent
Krzysztof Bogdan (Krzysztof.Bogdan@pwr.wroc.pl)

\noindent
Tomasz Grzywny (Tomasz.Grzywny@pwr.wroc.pl)

\noindent
Institute of Mathematics and Computer Science,

\noindent
Wroc{\l}aw University of Technology

\noindent
Wybrze{\.z}e Wyspia{\'n}skiego 27, 50-370, Wroc{\l}aw, Poland

\end{document}